\documentclass[11pt]{amsart}

\usepackage[T1]{fontenc}
\usepackage{lmodern,amsmath,amssymb,amsthm,mathtools,mathrsfs}
\usepackage{xcolor}
\definecolor{dblue}{rgb}{0,0,.6}
\usepackage[colorlinks=true,linkcolor=dblue,citecolor=dblue,urlcolor=dblue]{hyperref}
\allowdisplaybreaks
\numberwithin{equation}{section}

\theoremstyle{definition}
\newtheorem{Definition}{Definition}[section]
\newtheorem{Remark}[Definition]{Remark}
\theoremstyle{plain}
\newtheorem{Theorem}[Definition]{Theorem}
\newtheorem{Proposition}[Definition]{Proposition}
\newtheorem{Lemma}[Definition]{Lemma}

\newcommand{\cal}{\mathcal}
\newcommand{\Aff}{\mathbb A}

\newcommand{\QQ}{\mathbb Q}
\newcommand{\PP}{\mathbb P}
\newcommand{\HH}{\mathrm H}
\newcommand{\OO}{\cal O}
\newcommand{\calA}{\mathscr A}
\newcommand{\calY}{\mathscr Y}
\newcommand{\calB}{\mathscr B}
\newcommand{\calH}{\mathscr H}
\newcommand{\calR}{\mathscr R}

\newcommand{\rank}{\operatorname{rank}}

\newcommand{\Span}{\operatorname{Span}}

\newcommand{\Spec}{\operatorname{Spec}}
\newcommand{\length}{\operatorname{length}}
\newcommand{\GGm}{\mathbb G_m}

\title[Polynomial Compositions with Bounded Input Degree]
{Linear Independence of Polynomial Compositions with Bounded Input Degree}
\author{Xiaotian Chang}
\address{Division of Mathematical Sciences,
Nanyang Technological University}
\email{xiaotian.chang@ntu.edu.sg}

\author{Yacoub Hendi}
\address{Department of Mathematics, Uppsala University}
\email{yacoub.hendi@math.uu.se}
\hypersetup{
  pdftitle={Linear Independence of Polynomial Compositions with Bounded Input Degree},
  pdfauthor={Xiaotian Chang and Yacoub Hendi}
}
\date{}
\subjclass[2020]{14D15, 14H60, 68T05, 68T07}
\keywords{Linear independence, rational curves, deformation theory,
identifiability, neural networks}

\begin{document}

\begin{abstract}
We prove the weak composition-independence conjectures of Kohn,
Marchetti, Massarenti and Mella.  The proof uses deformations of
rational curves and a single kernel of the linearized equation.
Riemann-Roch gives a contradiction from tangent fields with
controlled poles.
By settling these conjectures, we prove that a neural network with a general polynomial activation function of sufficiently large degree is identifiable.
\end{abstract}

\maketitle

\section{Introduction}

In this paper, we consider linear relations among polynomial
compositions. Kohn, Marchetti, Massarenti and Mella,  inspired by the problem of identifiability of neural networks with general polynomial activations, ask whether,
for a general polynomial \(\sigma\) of sufficiently large degree,
the compositions \(\sigma(p_1),\ldots,\sigma(p_k)\) are linearly
independent whenever \(p_1,\ldots,p_k\) are pairwise distinct and
nonconstant \cite[Conjecture 1.2]{KMMM}. Their weak conjectures
allow the open set of composing polynomials to depend on a bound
for the degrees of the inputs. The lower bound on \(\deg\sigma\)
must depend only on \(k\) \cite[Conjectures 2.6 and 3.2]{KMMM}.

The problem is related to the theorem of Newman and Slater on linear
independence of powers of pairwise nonproportional polynomials
\cite{NS}, 
which was the main ingredient in proving identifiability for monomial neural networks \cite{FRWY24,Cra25}.
For a general composing polynomial, the inputs need only be
distinct.
The results of \cite{KMMM} for inputs of bounded degree give
a lower bound for the composition degree that also depends on that
input-degree bound. Here the latter dependence is removed.

\subsection*{Main results}
Put
\[
 A_k=\frac{5k^2-9k+8}{2},\qquad
 \calA_r=\Spec\QQ[a_0,\ldots,a_r],\qquad
 \calA_r^0=\{a_0=0\}\subset\calA_r.
\]
A point \(a\) represents the polynomial
\(\sigma_a(z)=\sum_{j=0}^r a_jz^j\).
Throughout, nonconstant means nonconstant over the coefficient field,
and linear independence is over that field.
The main purpose of this paper is to prove the following.

\begin{Theorem}\label{thm:polynomial}
Let \(k\ge2\), \(r\ge A_k\), and \(M\ge1\) be integers. There is
a nonempty \(\QQ\)-defined Zariski open subset
\(U_{k,r,M}\subset\calA_r\) with the following property.
For every extension \(L/K\) of characteristic-zero fields, every
\(\sigma\in U_{k,r,M}(K)\), every \(n\ge1\), and every ordered
tuple of pairwise distinct nonconstant polynomials
\[
 p_1,\ldots,p_k\in L[t_1,\ldots,t_n],\qquad
 \deg p_i\le M,
\]
the compositions \(\sigma(p_1),\ldots,\sigma(p_k)\) are linearly
independent over \(L\).

Moreover, \(U_{k,r,M}^0:=U_{k,r,M}\cap\calA_r^0\) is nonempty.
In particular it gives the same conclusion when both the composing
polynomial and all the inputs vanish at the origin.
\end{Theorem}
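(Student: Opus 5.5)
We reduce the statement to a dimension count on a parameter space of "bad" configurations. The goal is to show that this space cannot dominate \(\calA_r\) when \(r\ge A_k\). The first reduction is to one variable and to a finite-type family. If \(\sum c_i\sigma(p_i)=0\) holds with the \(p_i\in L[t_1,\ldots,t_n]\) distinct and nonconstant, we restrict to a general line \(t=\alpha+\beta s\) defined over an extension of \(L\). The restricted polynomials \(p_i(s)\) stay nonconstant and pairwise distinct, have degree \(\le M\), and still satisfy the relation. So it suffices to treat \(n=1\). Consider the incidence variety
\[
 \calB_{k,r,M}\subset \calA_r\times (\PP^{k-1})\times (\Aff^{M+1})^k
\]
of tuples \((a,[c],p_1,\ldots,p_k)\) with \(\sum c_i\sigma_a(p_i)=0\), \(p_i\) pairwise distinct and nonconstant. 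This is a \(\QQ\)-defined constructible set. Its image in \(\calA_r\) is constructible, and the complement of its closure gives \(U_{k,r,M}\) once we show the image is not dense. Since everything is defined over \(\QQ\), the conclusion then holds over every field \(K\) of characteristic zero and every extension \(L\). The same argument applies inside \(\calA_r^0\) with \(p_i(0)=0\).

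We may assume the relation is minimal, so all \(c_i\ne0\) and no proper subrelation exists. We also quotient by the symmetry \(s\mapsto us+v\) of the source. Then we suppose, for contradiction, that some component \(Z\) of \(\calB\) dominates \(\calA_r\) (respectively \(\calA_r^0\)). Generic smoothness gives a first-order statement. For every tangent direction \(\dot a\in T\calA_r\), i.e.\ every polynomial \(\dot\sigma\) of degree \(\le r\), there are \(\dot c_i,\dot p_i\) with
\[
 \sum_i \dot c_i\,\sigma(p_i)+\sum_i c_i\,\dot\sigma(p_i)+\sum_i c_i\,\sigma'(p_i)\,\dot p_i=0 .
\]
We view \(s\mapsto (p_1(s),\ldots,p_k(s))\) as a rational curve \(\PP^1\to(\PP^1)^k\) lying on the hypersurface \(X_{a,c}=\{\sum c_i\sigma(x_i)=0\}\). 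The displayed equation says that deforming \(\sigma\) in an arbitrary direction can be followed by a deformation of the curve. Hence the normal/tangent data along the curve must surject onto the \((r+1)\)-dimensional space of \(\dot\sigma\). The unknowns \(\dot p_i,\dot c_i\) span a space of dimension at most \(k(M+1)+k\). That bound depends on \(M\), so a naive count only gives \(r\) larger than something in \(M\), which is the KMMM result. To remove \(M\) we must use the relation's structure and not just the parameter count.

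The key step is to fix a single kernel of the linearized map. We choose \(\dot\sigma\) ranging over a space of dimension about \(r-O(k^2)\), for instance polynomials vanishing to high order at the finitely many critical values of \(\sigma\) and at the points where the \(p_i\) collide. Each solution \((\dot p_i)\) then gives a vector field \(\dot p_i/ p_i'\) on the source \(\PP^1\). Its poles are controlled: they occur only at ramification points of the \(p_i\), where the vanishing of \(\dot\sigma\) limits the pole order, and at \(\infty\), with order bounded in terms of \(k\) and independent of \(M\). The pole bounds come from comparing the orders of vanishing of \(\sigma'(p_i)\) with those of the other terms in the equation. Riemann--Roch on \(\PP^1\) then bounds the dimension of the space of such tangent fields by a quantity depending only on \(k\). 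Pairwise distinctness and minimality ensure that distinct \(\dot\sigma\) give independent fields modulo the reparametrization symmetry. Comparing this bound with the \(r-O(k^2)\) dimension of the space of \(\dot\sigma\) gives a contradiction as soon as \(r\ge A_k=(5k^2-9k+8)/2\). The quadratic term comes from the \(\binom k2\) collision loci and the \(k\) ramification contributions.

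The main obstacle is the pole-control step. We must show that the fields \(\dot p_i/p_i'\) have poles bounded independently of \(\deg p_i\), even though the \(p_i\) may have many ramification points. The plan is to use that at a simple critical point of \(p_i\) whose image is not a critical value of \(\sigma\), the equation forces \(\dot p_i\) to vanish there. Only the special fibres over critical values of \(\sigma\) and over coincidences \(p_i=p_j\) contribute poles. Their number is bounded using genericity of \(\sigma\), which gives distinct critical values and no nontrivial symmetries. The case \(\calA_r^0\) goes the same way, with one fewer parameter and the extra condition \(\dot\sigma(0)=0\). That condition still leaves the count nonpositive, so \(U^0_{k,r,M}\ne\emptyset\).
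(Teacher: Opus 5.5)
\textbf{The step meant to remove the dependence on $M$ does not work.} Your reductions are sound and match the paper: restriction to a line, the incidence variety with Chevalley, and generic smoothness giving first-order lifts of every $\dot\sigma$. The failures are in the key step.

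First, the count is wrong. A polynomial of degree $\le r$ vanishing at the $r-1$ critical points (or critical values) of a generic $\sigma$ lies in a space of dimension at most $2$, and the space is zero if you demand higher-order vanishing. For critical points this space is exactly $(\alpha z+\beta)\sigma'$, the trivial directions coming from affine changes of the target coordinate. So it is not of dimension $r-O(k^2)$. Imposing vanishing at the values of the collision points $p_i=p_j$ costs up to $\binom{k}{2}M$ conditions, which brings $M$ back.

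Second, the pole-control mechanism is false. The infinitesimal translation $\dot\sigma=\sigma'$, $\dot c=0$, $\dot p_i=-1$ solves your linearized equation, and this $\dot\sigma$ vanishes at every critical point of $\sigma$. Yet $\dot p_i/p_i'=-1/p_i'$ has a pole at each of the $\deg p_i-1$ ramification points of $p_i$, whatever their images. The linearized equation couples all indices and imposes nothing on $\dot p_i$ at a ramification point of $p_i$.

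Third, several other steps are unsupported:
\begin{itemize}
\item $\dot p_i/p_i'$ depends on $i$, so there is no single source vector field.
\item The map from $\dot\sigma$ to fields exists only after choosing lifts, which are ambiguous modulo deformations inside the fixed fibre. It kills every $\dot\sigma$ with $\sum_ic_i\dot\sigma(p_i)=0$, for instance $\dot\sigma=\sigma$, so ``distinct $\dot\sigma$ give independent fields'' is unjustified.
\item The $\dot c_i$ terms are never treated.
\item The value of $A_k$ is asserted, not derived.
\end{itemize}

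\textbf{The missing idea.} You should not seek pole bounds independent of the degree. Instead, compare degrees of bundles that all scale linearly in the joint degree $D$. The paper does this as follows.
\begin{itemize}
\item It fixes one lift $H_0$ of all coefficient directions and lets $W$ be the span of the scalar components of the lifts.
\item It takes the kernel $\calB$ of the restricted normal map $f^*T_{\PP^s}\oplus(W\otimes\OO_C)\to\OO_C(rD)$. The Euler sequence gives $\deg\calB=(s+1-r)D+\tau$. The defect of surjectivity satisfies $\tau\le(s-1)D$, and $\tau=0$ when $W$ is everything; this is how the $\dot c_i$ are handled.
\item It uses universal polynomial tangent fields on the incidence (Lemma~\ref{lem:frame}). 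These also vary the coefficients, and their spatial degrees $w_i$ are bounded in terms of $s$ alone.
\item Pulled back and projected by $1-H\circ d\pi$, these fields become sections of $\calB(w_iE)$, where $E=\operatorname{div}(Q)$ has degree $D$.
\item Together with $df:\OO_C(2)\to\calB$ they span a full-rank subsheaf. Hence $2-D\sum_iw_i\le(s+1-r)D+\tau$; here $D$ cancels, which yields $r<A_s$.
\end{itemize}
Poles may grow with $D$, but only as fixed multiples of $E$, and this growth is matched by the factor $D$ in $\OO_C(rD)$.
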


This proves Conjectures 2.6 and 3.2 of \cite{KMMM}. The open set
may depend on \(M\), whereas the bound \(A_k\) depends only
on the number of inputs. No intersection over unbounded input degrees
is taken.

We prove a rational version at the same time. If
\(p_1,\ldots,p_k\in L(t)\), the rational map
\[
 t\longmapsto[1:p_1(t):\cdots:p_k(t)]
\]
extends uniquely to a morphism \(f:\PP^1_L\to\PP^k_L\).

\begin{Definition}\label{def:joint}
The \emph{joint degree} of \((p_1,\ldots,p_k)\) is
\(\deg f^*\OO_{\PP^k}(1)\). Equivalently, write
\[
 f=[Q:P_1:\cdots:P_k]
\]
using homogeneous polynomials of one degree without a common zero. That
degree is the joint degree, and \(p_i=P_i/Q\).
\end{Definition}

For polynomial inputs this is \(\max_i\deg p_i\). For rational
inputs it measures the generated map, rather than the degrees of
separately chosen numerator--denominator pairs.

\begin{Theorem}\label{thm:rational}
The open set in Theorem~\ref{thm:polynomial} can be chosen so that,
in addition, it separates every tuple of pairwise distinct nonconstant
rational functions \(p_1,\ldots,p_k\in L(t)\) of joint degree at
most \(M\), for every extension \(L/K\) as above. Its intersection
with \(\calA_r^0\) is nonempty.
\end{Theorem}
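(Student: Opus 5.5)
The plan is to reduce both theorems to a statement about morphisms \(f:\PP^1\to\PP^k\) of degree at most \(M\) and then argue by a dimension count plus deformation theory. First, the multivariate polynomial case of Theorem~\ref{thm:polynomial} reduces to one variable. If \(\sigma(p_1),\ldots,\sigma(p_k)\) are dependent over \(L\), restrict to a general line \(t_j=u_j+v_jt\) defined over an extension of \(L\). The restricted inputs remain pairwise distinct and nonconstant, have degree at most \(M\), and still give a linear relation. After this reduction it suffices to treat rational inputs \(p_i\in L(t)\) of joint degree at most \(M\); polynomial inputs are the special case where \(Q\) is a power of the coordinate at \(\infty\). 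Consider the incidence variety
\[
 Z=\{(\sigma,f,[\lambda]) : \textstyle\sum_i\lambda_i\sigma(p_i)=0\}\subset \calA_r\times \mathrm{Mor}_{\le M}(\PP^1,\PP^k)\times\PP^{k-1},
\]
with \(f\) restricted to tuples having distinct nonconstant coordinates. This is a \(\QQ\)-defined constructible family of bounded type, because \(M\) is fixed. Set \(U_{k,r,M}\) to be the complement of the closure of its image in \(\calA_r\). The claim is then that the projection \(Z\to\calA_r\) (and also \(Z\cap\{a_0=0\}\to\calA_r^0\)) is not dominant. Field extensions cause no difficulty, since \(Z\) is defined over \(\QQ\) and is stable under base change. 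We may also assume \(\lambda\) has all entries nonzero and is minimal, by passing to a subtuple, and take \(k\) to be that minimal size.

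To show non-dominance, suppose it is dominant. Then over the generic point of \(\calA_r\) there is a relation \(\sum\lambda_i\sigma(p_i)=0\) with \(\sigma\) generic. By generic smoothness, every first-order deformation \(\dot\sigma\in\QQ[z]_{\le r}\) lifts to deformations \(\dot p_i\) (rational functions with poles only where \(Q\) vanishes, of bounded order) and \(\dot\lambda_i\). Linearizing gives
\[
 \sum_i \lambda_i\bigl(\dot\sigma(p_i)+\sigma'(p_i)\dot p_i\bigr)+\sum_i\dot\lambda_i\sigma(p_i)=0 .
\]
The space of admissible \(\dot p_i,\dot\lambda_i\) has dimension bounded linearly in \(kM\) plus \(k\). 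Hence the \((r+1)\)-dimensional space of \(\dot\sigma\) must map into the image of a space of dimension about \(k(M+1)\). When \(r\) is large compared to \(M\), one therefore gets a large kernel. This is where \(M\) enters, and it explains why the open set depends on \(M\). To remove \(M\) from the degree bound, I would not count dimensions crudely. Instead I would choose a single nonzero element of the kernel, namely a \(\dot\sigma\) that can be absorbed. Interpret the resulting \(\dot p_i\) as sections of \(f^*T_{\PP^k}\), twisted by a divisor of controlled poles supported on the fibres over \(\infty\) and the points where \(p_i=p_j\) or \(\sigma'(p_i)=0\). Riemann--Roch on \(\PP^1\), applied to \(f^*T_{\PP^k}(-D)\) for a suitable divisor \(D\), should then bound \(r\) by a quadratic function of \(k\) alone, namely \(A_k\). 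This contradicts \(r\ge A_k\).

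The main obstacle is this last step. One has to construct, from the linearized relation, a tangent field whose pole and vanishing orders are controlled purely in terms of \(k\) and \(\deg\sigma\), independently of \(\deg f\). Concretely, I would evaluate the relation and its derivatives up to order about \(k\) at the ramification and collision points. This produces \(\sim k^2\) vanishing conditions, which forces a section of negative-degree bundle to vanish. I expect the constant \(\tfrac{5k^2-9k+8}{2}\) to emerge from balancing these \(O(k^2)\) conditions against the degree \(r\). Finally, for \(U^0\), I would repeat the argument inside \(\calA^0_r\) with deformations satisfying \(\dot\sigma(0)=0\). The same kernel argument goes through, since only the \(\dot a_0\) direction is lost.
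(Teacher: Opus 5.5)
There is a genuine gap, and it is at the step you yourself call the main obstacle. Your setup matches the paper's first stage: the bounded-degree incidence, supposing dominance, using generic smoothness to lift every $\dot\sigma$, and linearizing. But the step that turns the linearized relation into a contradiction is only asserted (``should then bound $r$'', ``I expect the constant to emerge''), and the mechanisms you suggest would not give one.

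\begin{itemize}
\item A single absorbed $\dot\sigma$ yields only one section, i.e.\ a rank-one subsheaf. A bundle of very negative degree can still have sections with bounded poles, so one section contradicts nothing.
\item $f^*T_{\PP^k}$ has degree $(k+1)D>0$, with $D$ the joint degree, and does not involve $r$ at all. The degree $r$ enters only through the normal bundle $N_{Y_a/X}\simeq\OO_{Y_a}(r)$ of the relation hypersurface, i.e.\ through a map to $\OO_C(rD)$.
\item A contradiction needs a \emph{full-rank} subsheaf of controlled degree inside a bundle whose degree is forced down by that map.
\item The number of collision points $p_i=p_j$ and critical points $\sigma'(p_i)=0$ grows with $D$, and you give no bound on pole orders there. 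So nothing independent of $M$ visibly comes out; the paper avoids such poles entirely.
\item For $\calA_r^0$, rerunning with $\dot a_0=0$ is not automatic: natural tangent fields such as $\delta\sigma=T_x$ or $\delta\sigma=-\sigma_a'$ change $a_0$.
\end{itemize}

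What the missing argument looks like in the paper:
\begin{itemize}
\item It uses the lifts $H_0(e_j)$ of \emph{all} coefficient directions, i.e.\ a splitting $H$ of the pulled-back tangent sequence of $\calY_s\to\Sigma_r$. It lets $W$ be their scalar image and defines the kernel bundle $\calB=\ker\bigl(\mu_W:f^*T_{\PP^s}\oplus(W\otimes_L\OO_C)\to\OO_C(rD)\bigr)$.
\item This bundle satisfies $\chi(\calB(-1))=(s+1-r)D+\tau$. The image defect $\tau$ is bounded by $(s-1)D$ using a nonzero moment $\sum_i\lambda_iP_i^\nu$, and $\tau=0$ when $\ell=s-1$.
\item On the other side, it uses explicit tangent fields of $\calY_s$ with zero scalar component: $z^jT_x$, $S_iz^j-S_jz^i$ and $R_m$. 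These are polynomial in the nodes of degree at most $s$ (or $s+m-2$), with no inverse of $x_i-x_j$. Hence their pullbacks have poles only along multiples $wE$ of $E=\operatorname{div}(Q)$.
\item These fields span generically by Vandermonde. Projecting with $P=1-H\circ d\pi$, which requires the whole splitting, places them in $\calB(wE)$.
\item Together with $df:\OO_C(2)\to\calB$, and splitting into the cases $\ell=s-1$ and $\ell\le s-2$, this gives $0\le\length_L\calR\le(A_s-r)D-2<0$.
\end{itemize}
Because every pole bound is a multiple of $E$, both sides scale with $D$; this is exactly how the dependence on $M$ disappears. For the origin slice the paper reruns nothing. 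The translation $\sigma(z)\mapsto\sigma(z+c)$, $p_i\mapsto p_i-c$ preserves joint degree, so the good open set is translation invariant. Translating a good $\sigma$ by one of its roots then gives a good point with $a_0=0$.
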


The numerical bound is not asserted to be optimal. For instance,
\(A_2=5\), \(A_3=13\), and \(A_4=26\). The polynomial result
for two inputs in \cite[Proposition 2.1]{KMMM} has the better bound
\(r\ge3\).

Suppose that the locus of bounded-degree relations is dense.
A dominating family of relations gives a map \(f:C=\PP^1_L\to Y_a\)
and lifts of all coefficient directions. We use these lifts to
define one kernel bundle \(\calB\) on \(C\). Its defining
sequence determines \(\chi(\calB(-1))\), up to an effective
divisor whose degree is bounded by a scalar moment.

The source differential and explicit tangent fields give an exact
sequence
\[
 0\longrightarrow\OO_C(2)\oplus
   \bigoplus_{i=1}^h\OO_C(-w_iE)
   \longrightarrow\calB\longrightarrow\calR\longrightarrow0,
\]
where \(\calR\) is torsion and \(\deg E=D\ge1\).
Applying Riemann-Roch after twisting by \(\OO_C(-1)\), we obtain
\[
 0\le\length_L\calR\le(A_s-r)D-2<0,
\]
where \(s\le k\) is the number of nonzero terms in the relation.

We write \(\OO_C(B)\) for the line bundle of a divisor \(B\),
and \(\HH^i(C,\cal F)\) for sheaf cohomology. For a vector
bundle \(V\) on \(C=\PP^1_L\), Riemann-Roch gives
\begin{equation}\label{eq:RR}
 \chi(V):=\dim_L\HH^0(C,V)-\dim_L\HH^1(C,V)
       =\deg V+\rank V.
\end{equation}
In particular, \(\chi(V(-1))=\deg V\).
For a torsion sheaf \(\cal T\), we write
\[
 \length_L\cal T
   =\sum_{x\in C}\length_{\OO_{C,x}}\cal T_x\,[\kappa(x):L]
   =\chi(\cal T).
\]
We use Riemann-Roch in the form \eqref{eq:RR}
\cite[Tag 0B5B]{stacks}. The tangent and Euler sequences are
as in \cite[II, \S8]{Hartshorne}.

\subsection*{Application to identifiability of PNNs}
A fully connected polynomial neural network without biases, of widths
$\mathbf{d} = (d_0, \dots, d_L)$, is the map
\[
  f_W = W_L \circ \sigma \circ W_{L-1} \circ \sigma \circ \cdots \circ \sigma \circ W_1,
\]
where $W_i \in \mathbb{R}^{d_i \times d_{i-1}}$ and $\sigma$ is a univariate polynomial
applied componentwise. Hence each weight tuple $W = (W_1, \dots, W_L)$ defines a network,
and permuting the neurons of a hidden layer does not change $f_W$. Similarly, removing
inactive neurons (those whose incoming or outgoing weights all vanish) or merging redundant
ones (two neurons in the same layer with equal nonzero incoming weights) does not change $f_W$.
We call $W$ a \emph{subnetwork} if it has at least one inactive or redundant neuron, and we
call $W$ \emph{identifiable} if every $V$ with $f_V = f_W$ is obtained from $W$ by permuting
the neurons of the hidden layers.
In~\cite[Corollary~4.1]{KMMM} it is shown, based on the origin-passing version of the
conjecture ~\cite[Conjecture 3.2]{KMMM}, that for $\sigma$ with sufficiently large degree these three operations are the only ways to
change the network without changing the map $f_W$. Therefore, proving
Theorem~\ref{thm:polynomial} immediately yields the following theorem.

\begin{Theorem}[Identifiability of bias-free PNNs for generic $\sigma$]\label{thm:identifiability}
Let $\mathbf{d} = (d_0, \dots, d_L)$ with $L \ge 2$, let $D = \max(d_0, \dots, d_{L-1})$, and let
\[
  r \ \ge\ A_{2D} \ =\ 10 D^2 -9D + 4.
\]
There is a nonempty Zariski open set $U \subseteq \mathbb{R}^0[u]_{\le r}$ such that, for every
$\sigma \in U$, the weights $W$ in architecture $\mathbf{d}$ are identifiable if and only if
$W$ is not a subnetwork. 
Moreover if the architecture that has layer-specific activations $\sigma_i$ with $r_i := \deg \sigma_i$ for $1\leq i \leq L-1$ then the same holds if $r_i \geq A_{2d_i}$.
In particular, for each $i$ there is a nonempty open zariski sets $U_i \subset \mathbb{R}^0[u]_{\leq r_i}$ such that for any choice $(\sigma_1, \dots, \sigma_{L-1}) \in U_1 \times \dots \times U_{L-1}$, the weights $W$ are identifiable if and only if $W$ is not subnetwork.
\end{Theorem}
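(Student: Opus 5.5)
The plan is to deduce Theorem~\ref{thm:identifiability} from Theorem~\ref{thm:polynomial} (the origin-passing part), combined with the reduction of \cite[Corollary~4.1]{KMMM}. That corollary says: if, for a given activation \(\sigma\), every family of at most \(2D\) pairwise distinct nonconstant polynomials vanishing at the origin has linearly independent compositions, then the only operations preserving \(f_W\) are neuron permutations, removal of inactive neurons, and merging of redundant ones. Here \(D\) is the maximal hidden width, and the polynomials are the pre-activations of two competing networks in one layer. Consequently, a non-subnetwork \(W\) is identifiable, while a subnetwork is trivially non-identifiable, since removal or merging changes the architecture or yields a non-permutation-equivalent \(V\) with \(f_V=f_W\).

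The first step is to record the input degrees that occur. For a fixed architecture \(\mathbf d\) and \(\deg\sigma\le r\), every pre-activation at layer \(i\) is a polynomial in \(d_0\) variables of degree at most \(M_i:=r^{i-1}\le r^{L-1}=:M\). The bound \(M\) depends only on \(r\) and \(L\), not on the weights. Moreover, every pre-activation vanishes at the origin, because there are no biases and \(\sigma(0)=0\).

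Next, I apply Theorem~\ref{thm:polynomial} with \(k=2D\) and this \(M\). This gives a nonempty \(\QQ\)-open set \(U^0_{2D,r,M}\subset\calA_r^0\). I also apply it with each \(k'\le 2D\): a relation among fewer inputs becomes one among \(2D\) inputs after padding with extra distinct inputs carrying zero coefficients, so the case \(k=2D\) already suffices. Since \(A_k\) is increasing in \(k\), the hypothesis \(r\ge A_{2D}\) covers all of these cases. I take \(U\) to be the set of real points of \(U^0_{2D,r,M}\). This set is nonempty and Zariski open in \(\mathbb R^0[u]_{\le r}\), because a nonempty \(\QQ\)-open set has a dense set of real points. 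Because the theorem holds for every extension \(L/K\), it applies over \(K=L=\mathbb R\), and over the coefficient field of the weights. Feeding this into \cite[Corollary~4.1]{KMMM} gives the first claim. The arithmetic \(A_{2D}=(20D^2-18D+8)/2=10D^2-9D+4\) is a direct check.

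For layer-specific activations, I run the same argument layer by layer. The inductive identification in \cite{KMMM} only uses independence at layer \(i\) for at most \(2d_i\) inputs composed with \(\sigma_i\). The input degrees there are bounded by \(M_i=\prod_{j<i}r_j\). So I set \(U_i\) to be the real points of \(U^0_{2d_i,r_i,M_i}\). The main point needing care is that this bound \(M_i\) depends on the degrees \(r_j\) but not on the particular \(\sigma_j\in U_j\), so the open sets can be chosen independently and then taken as a product. I also need to verify that the KMMM reduction genuinely uses the layerwise count \(2d_i\) rather than the global \(2D\); I expect this bookkeeping to be the main obstacle.
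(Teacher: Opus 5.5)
Your proposal is correct and takes the paper's route: the paper obtains this theorem in one line by feeding the origin-passing part of Theorem~\ref{thm:polynomial} into \cite[Corollary~4.1]{KMMM}. Your added details are the bookkeeping the paper leaves implicit: the uniform degree bound $M=r^{L-1}$ (or $\prod_{j<i}r_j$ layerwise), which is what lets the $M$-dependent open set suffice, the padding to $k=2D$, and passing to real points.
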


\section{Families of Relations}

Let
\[
 \Sigma_r=\{a_r\ne0,\ \operatorname{disc}(\sigma_a)\ne0\}
            \subset\calA_r.
\]
Here, if \(\sigma_a(z)=a_r\prod_{i=1}^r(z-\alpha_i)\) over
an algebraic closure, its discriminant is
\[
 \operatorname{disc}(\sigma_a)
    =a_r^{2r-2}\prod_{i<j}(\alpha_i-\alpha_j)^2.
\]
It is a polynomial in the coefficients \(a_j\). Thus \(\Sigma_r\)
is the open set of squarefree polynomials of exact degree \(r\).
We first construct the family of bounded-degree relations.

\begin{Lemma}\label{lem:incidence}
Fix \(k,r,M\). Call a geometric point of \(\Sigma_r\) bad if
it admits a nontrivial scalar relation among \(k\) compositions.
The inputs are required to be distinct nonconstant rational
functions of joint degree at most \(M\). The bad locus is the
image of a morphism \(\mathscr I\to\Sigma_r\) of finite-type
\(\QQ\)-schemes.
\end{Lemma}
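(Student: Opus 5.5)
The plan is to parametrize all the data of a relation by a finite-type scheme and to cut out the relation, distinctness, and nonconstancy conditions by locally closed conditions. Tuples of rational functions of joint degree exactly \(m\le M\) correspond, by Definition~\ref{def:joint}, to tuples \((Q,P_1,\ldots,P_k)\) of binary forms of degree \(m\) in \((x,y)\) with no common zero, up to a common scalar. So I would take the affine space \(\Aff^{(k+1)(m+1)}\) of coefficient tuples. Inside it, the locus \(W_m\) where \(Q,P_1,\ldots,P_k\) have no common zero on \(\PP^1\) is open: it is the complement of the image of the closed common-zero locus in \(\Aff^{(k+1)(m+1)}\times\PP^1\) under the proper projection. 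I would then remove the closed locus \(Q=0\), and pass to the quotient by \(\GGm\), or simply keep the cone, since only the image matters. Next I would add a coefficient vector \((c_1,\ldots,c_k)\in\Aff^k\setminus\{0\}\) and a point \(a\in\Sigma_r\).

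The relation \(\sum_i c_i\sigma_a(p_i)=0\) with \(p_i=P_i/Q\) becomes, after multiplying by \(Q^r\), the polynomial identity
\[
 \sum_i c_i\sum_{j=0}^r a_jP_i^jQ^{r-j}=0
\]
among binary forms of degree \(rm\). Its coefficients are polynomial in \((a,c,Q,P)\), so this defines a closed subscheme. The remaining conditions are as follows.
\begin{itemize}
\item Nonconstancy of \(p_i\) means that \(P_i\) and \(Q\) are not proportional, i.e.\ some \(2\times2\) minor of their coefficient vectors is nonzero. This is an open condition.
\item Distinctness \(p_i\ne p_j\) means \(P_i\ne P_j\), again an open condition.
\end{itemize}
Let \(\mathscr I_m\) be the resulting locally closed subscheme of \(\Sigma_r\times(\Aff^k\setminus0)\times W_m\), and set \(\mathscr I=\coprod_{m=1}^M\mathscr I_m\). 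This is a finite-type \(\QQ\)-scheme, and the projection to \(\Sigma_r\) is a morphism. If one wants polynomial inputs only, one could further restrict to \(Q=y^m\), but the rational version already contains those.

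It then remains to check that the image is exactly the bad locus on geometric points. A geometric point \(\bar a\) of \(\Sigma_r\) with residue field \(\Omega\), algebraically closed, lies in the image iff there is an \(\Omega\)-point of the fibre, since the fibre is of finite type over \(\Omega\). Such a point gives forms over \(\Omega\) and hence a relation over \(\Omega\). Conversely, a relation among distinct nonconstant rational functions in \(\Omega(t)\) of joint degree \(m\le M\) gives, by Definition~\ref{def:joint}, forms without common zero. Dehomogenizing with \(t=x/y\) recovers \(p_i=P_i/Q\), and the conditions translate exactly.

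The main subtlety is the intended meaning of ``geometric point'' when the inputs live over a larger field \(L\). I would handle this by noting that the fibre of \(\mathscr I\to\Sigma_r\) over \(a\) is of finite type over \(\kappa(a)\). By the Nullstellensatz it has an \(L\)-point for some extension \(L\) iff it is nonempty, iff it has a \(\overline{\kappa(a)}\)-point. Hence badness is independent of the chosen field, and the image (a constructible set by Chevalley) is precisely the bad locus.
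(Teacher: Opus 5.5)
Your proof is correct and proves the lemma as stated, but it organizes the incidence differently from the paper.

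You keep the whole \(k\)-tuple, of exact joint degree \(m\), together with an arbitrary coefficient vector \(c\in\Aff^k\setminus0\). The image is then literally the bad locus, and no normalization of relations is needed. The paper instead first normalizes each relation. It deletes the terms with zero coefficient, rescales so the remaining scalars lie in \(\GGm\) with \(\lambda_s=1\), excludes \(s=1\), and cancels the common factor of the active forms \((Q,P_1,\ldots,P_s)\). Its incidence is then a disjoint union over the number \(s\) of active terms and their joint degree \(D\le M\).

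That normalization costs an extra step for the reverse inclusion. The paper pads the active tuple with distinct translates \(p_1+c\) carrying zero coefficients. This is possible over a geometric field, and it leaves the joint degree unchanged because \(P_1+cQ\) is a linear combination of existing coordinates.

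What the normalization buys is an incidence already in the form of the universal hypersurface \(\calY_s\subset\PP^s\times\Lambda_s\times\Sigma_r\) used afterwards. Lemma~\ref{lem:smooth-incidence} needs \(\lambda_i\ne0\) and \(s\ge2\). On a dominating component \(\calH\), the invariants \(s\) and \(D\) that enter the final inequality are constant. On your \(\mathscr I\), the support of \(c\) and the joint degree of the active subtuple need not be constant on a component. So the paper's normalization would have to be carried out later, before the deformation argument can start.
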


\begin{proof}
In a nonzero relation, delete the terms with zero coefficient and
divide by one of the remaining coefficients. We may then assume
that the relation has \(s\) terms, with all \(\lambda_i\ne0\)
and \(\lambda_s=1\). A one-term relation is impossible, since
composition of a nonconstant rational function with a nonconstant
polynomial is nonconstant. Thus \(2\le s\le k\).
After deleting coordinates, cancel the common homogeneous factor
in \((Q,P_1,\ldots,P_s)\). The resulting joint degree is at least
one and at most the original joint degree.

For \(2\le s\le k\) and \(1\le D\le M\), let
\(Q,P_1,\ldots,P_s\) be homogeneous polynomials of degree \(D\)
in two variables, without a common zero, taken up to a common
nonzero scalar. Let \(Q\ne0\), let no \(P_i\) be proportional
to \(Q\), and let \(P_i\ne P_j\) for \(i\ne j\).
These conditions define an open subset of a projective space.
Indeed, the common-zero locus is the proper image of a closed
incidence over \(\PP^1\); the other excluded conditions are
given by coefficient equalities or minors.

Adjoin \((\lambda_1,\ldots,\lambda_{s-1})\in(\GGm)^{s-1}\),
put \(\lambda_s=1\), and impose the coefficient equations of
\[
 \sum_{i=1}^s\lambda_i
       \sum_{j=0}^r a_jP_i^jQ^{r-j}=0.
\]
The finite disjoint union of these incidences over \(s,D\) is a
finite-type \(\QQ\)-scheme \(\mathscr I\). Its image is the
bad locus in the statement. To see the reverse inclusion when
\(s<k\), complete the tuple by distinct translates \(p_1+c\)
and assign zero coefficients to the added terms. Over a geometric
field this is possible, and the joint degree remains \(D\).
The image is constructible by Chevalley's theorem
\cite[Tag 054K]{stacks}.
\end{proof}

Put \(\Lambda_s=(\GGm)^{s-1}\), with \(\lambda_s=1\), and set
\[
 F_a(X,Z)=\sum_{j=0}^r a_jX^jZ^{r-j}.
\]
Consider the universal incidence
\begin{equation}\label{eq:universal}
 \calY_s=
 \left\{\sum_{i=1}^s\lambda_iF_a(X_i,Z)=0\right\}
 \subset\PP^s\times\Lambda_s\times\Sigma_r.
\end{equation}
Write \(\pi:\calY_s\to\Sigma_r\) for the projection and
\(Y_a\) for its fiber over \(a\). The order of
homogeneous coordinates will be written as \([Z:X_1:\cdots:X_s]\).

\begin{Lemma}\label{lem:smooth-incidence}
The morphism \(\calY_s\to\Sigma_r\) is smooth. The projection
\(Y_a\to\Lambda_s\) is smooth along \(Z=0\).
\end{Lemma}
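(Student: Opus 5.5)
The plan is to apply the Jacobian criterion to the single equation
\[
 G=\sum_{i=1}^s\lambda_iF_a(X_i,Z)
\]
defining \(\calY_s\) inside \(\PP^s\times\Lambda_s\times\Sigma_r\). Since \(\calY_s\) is a hypersurface in a scheme smooth over \(\Sigma_r\), it is smooth over \(\Sigma_r\) at a point exactly when the derivatives of \(G\) in the fiber variables \((Z,X_1,\ldots,X_s,\lambda_1,\ldots,\lambda_{s-1})\) do not all vanish there. The relative dimension is then automatically the expected one. So the work is to show that, at every point of every fiber \(Y_a\) with \(a\in\Sigma_r\), the partial derivatives \(\partial G/\partial X_i\), \(\partial G/\partial Z\), \(\partial G/\partial\lambda_j\) cannot vanish simultaneously. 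By Euler's formula for the homogeneous \(G\), the vanishing of all \(\partial G/\partial X_i\) and \(\partial G/\partial Z\) already forces \(G=0\). I will therefore work on the affine cone and ask when the homogeneous gradient vanishes.

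We have \(\partial G/\partial X_i=\lambda_i\,\partial_XF_a(X_i,Z)\), and \(\lambda_i\ne0\). So vanishing of these derivatives forces \(\partial_XF_a(X_i,Z)=0\) for every \(i\).

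\emph{Case \(Z=0\).} Here \(\partial_XF_a(X_i,0)=ra_rX_i^{r-1}\). Since \(a_r\ne0\), this forces all \(X_i=0\), which is impossible in \(\PP^s\). Hence along \(Z=0\) some \(\partial G/\partial X_i\) is nonzero. This derivative involves only the coordinates of \(\PP^s\), not \(\lambda\). So at points of \(Y_a\cap\{Z=0\}\) the fiber of \(Y_a\to\Lambda_s\) is a hypersurface in \(\PP^s_\lambda\) with nonvanishing gradient. This gives smoothness of \(Y_a\to\Lambda_s\) along \(Z=0\) by the Jacobian criterion, again using flatness of a relative hypersurface with nonvanishing relative differential.

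\emph{Case \(Z\ne0\).} Dehomogenize with \(Z=1\) and set \(x_i=X_i\). The conditions become \(\sigma_a'(x_i)=0\) for all \(i\), together with
\[
 \partial G/\partial\lambda_j=\sigma_a(x_j)-\sigma_a(x_s)=0\qquad(j<s).
\]
The \(Z\)-derivative then vanishes automatically by Euler's formula. These conditions say that every \(x_i\) is a critical point of \(\sigma_a\) and all \(\sigma_a(x_i)\) are equal to a common value \(c\). The equation \(G=0\) gives
\[
 c\sum_i\lambda_i=0 .
\]
This is not yet a contradiction, so the \(\lambda\)-derivative alone is not enough. I must also use the homogeneous \(Z\)-derivative before dehomogenizing, namely \(\sum_i\lambda_i\,\partial_ZF_a(x_i,1)\). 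By Euler, \(\partial_ZF_a(x,1)=r\sigma_a(x)-x\sigma_a'(x)=rc\). So this derivative equals \(rc\sum_i\lambda_i\), which is the same condition.

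Thus a singular point exists only if either \(c=0\), or \(\sum_i\lambda_i=0\) with all \(x_i\) critical points sharing the critical value \(c\). The first alternative means some \(x_i\) is a common root of \(\sigma_a\) and \(\sigma_a'\), which is excluded by \(\operatorname{disc}(\sigma_a)\ne0\).

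The main obstacle is the second alternative: equal critical values at possibly equal critical points, with \(\sum\lambda_i=0\). Squarefreeness alone does not seem to rule this out. For example, \(s=2\), \(x_1=x_2\) a critical point, and \(\lambda_1=-1\) looks like a genuine singular point of the fiber. I would first recheck whether the intended statement is smoothness of the total space \(\calY_s\to\Sigma_r\) in a weaker sense, or whether the derivatives in the \(a\)-directions are allowed. Smoothness of \(\calY_s\) over \(\QQ\) is easy: \(\partial G/\partial a_0=\sum_i\lambda_iZ^r=rc'\)-type terms, and \(\partial G/\partial a_j=\sum_i\lambda_iX_i^jZ^{r-j}\) can be nonzero. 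If fiberwise smoothness is really claimed, I would look for the missing hypothesis that excludes these diagonal points. The key step is this critical-value analysis, and the \(Z=0\) part is routine.
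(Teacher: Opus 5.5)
Your treatment of the locus \(Z=0\) is correct and matches the paper. There \(\partial G/\partial X_i=ra_r\lambda_iX_i^{r-1}\) and some \(X_i\ne0\), so this purely spatial derivative gives smoothness of \(Y_a\to\Lambda_s\), and of \(\calY_s\to\Sigma_r\), at such points.

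The gap is the case \(Z\ne0\). There you do not reach a conclusion and instead suspect the statement is false, but this comes from a miscomputed derivative. In the paper \(\Lambda_s=(\GGm)^{s-1}\) has coordinates \(\lambda_1,\ldots,\lambda_{s-1}\), and \(\lambda_s=1\) is held fixed. Hence
\[
 G=\sum_{i<s}\lambda_iF_a(X_i,Z)+F_a(X_s,Z),\qquad
 \frac{\partial G}{\partial\lambda_j}=F_a(X_j,Z),
\]
which on \(Z=1\) is \(\sigma_a(x_j)\), not \(\sigma_a(x_j)-\sigma_a(x_s)\). Your formula would come from the normalization \(\lambda_s=1-\sum_{j<s}\lambda_j\). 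But under that normalization \(\sum_i\lambda_i=1\), so your own condition \(c\sum_i\lambda_i=0\) would force \(c=0\) and the argument would close anyway. You combined the derivative from one normalization with the possibility \(\sum_i\lambda_i=0\) from the other.

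With the correct derivative the argument is two lines, and this is the paper's proof. Since \(s\ge2\), \(\lambda_1\) is a genuine coordinate. Vanishing of \(\partial G/\partial x_1=\lambda_1\sigma_a'(x_1)\) and of \(\partial G/\partial\lambda_1=\sigma_a(x_1)\) gives, because \(\lambda_1\ne0\), a common root of \(\sigma_a\) and \(\sigma_a'\). This contradicts \(\operatorname{disc}(\sigma_a)\ne0\).

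Your proposed counterexample (\(s=2\), \(x_1=x_2\) a critical point, \(\lambda_1=-1\)) is therefore not a singular point of \(Y_a\): there \(\partial G/\partial\lambda_1=\sigma_a(x_1)\ne0\). It is only a singular point of the fiber of \(Y_a\to\Lambda_s\) over \(\lambda_1=-1\). That is exactly why the lemma asserts smoothness of \(Y_a\to\Lambda_s\) only along \(Z=0\).

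No extra hypothesis is needed, and no use of the \(a\)-derivatives either. Those would only give smoothness of \(\calY_s\) over \(\QQ\), not the relative smoothness over \(\Sigma_r\) needed later for the exact tangent sequence in Lemma~\ref{lem:horizontal}.
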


\begin{proof}
On \(Z=1\), the defining equation is
\[
 \Phi(a,x,\lambda)=\sum_i\lambda_i\sigma_a(x_i)=0.
\]
Its spatial derivatives are \(\lambda_i\sigma_a'(x_i)\), and
its derivatives in the scalar coordinates are \(\sigma_a(x_j)\)
for \(j<s\). If all these derivatives vanished, then
\(\sigma_a(x_1)=\sigma_a'(x_1)=0\). This contradicts squarefreeness.
Here \(s\ge2\), so the scalar coordinate \(\lambda_1\) is
available.

At \(Z=0\), the spatial homogeneous derivatives include
\[
 ra_r\lambda_iX_i^{r-1},\qquad 1\le i\le s.
\]
At least one is nonzero. Euler's identity shows that the homogeneous
differential annihilates the radial direction on the hypersurface;
it therefore gives a nonzero cotangent vector on \(\PP^s\).
The relative spatial differential is consequently surjective there.
These computations prove both assertions by the relative Jacobian
criterion.
\end{proof}

Suppose that the bad locus is dense. After base change to an
algebraically closed field \(L\) of characteristic zero, choose
a reduced irreducible component \(\calH\) of \(\mathscr I_L\)
which dominates \(\Sigma_{r,L}\). The number \(s\) of active
inputs and their joint degree \(D\) are fixed on \(\calH\).
Write \(q:\calH\to\Sigma_{r,L}\) for the projection.
Thus \(\calH\) parametrizes relations with fixed source \(\PP^1_L\),
fixed joint degree \(D\), and varying composing polynomial and scalars;
the map \(q\) remembers only the composing polynomial.

The smooth locus of \(q\) is nonempty. Indeed, generic flatness
and the existence of a smooth dense open in its integral generic
fiber give a smooth point of \(q\). Here the generic residue
field has characteristic zero and is therefore perfect
\cite[Tags 052B, 04QM and 01V4]{stacks}.
Choose \(b\in\calH(L)\) in this smooth locus and put \(a=q(b)\).
Then \(dq_b:T_b\calH\to T_a\Sigma_r\) is surjective.
We now omit base-change subscripts and take tangent bundles over \(L\).
The corresponding relation is
\begin{equation}\label{eq:relation}
 \sum_{i=1}^s\lambda_i\sigma_a(p_i)=0,
 \qquad\lambda_i\ne0,\quad\lambda_s=1,
 \qquad 2\le s\le k.
\end{equation}
Its generated coordinates give
\[
 f:C=\PP^1_L\longrightarrow Y_a,
 \qquad f=([Q:P_1:\cdots:P_s],\lambda),
 \qquad f^*\OO_{\PP^s}(1)=\OO_C(D).
\]
Let
\begin{equation}\label{eq:pole-divisor}
 E=\operatorname{div}(Q),\qquad\deg E=D.
\end{equation}
Every \(p_i=P_i/Q\) has pole divisor at most \(E\).
For polynomial inputs of maximum degree \(D\), this is
\(E=D[\infty]\).

\begin{Lemma}\label{lem:horizontal}
The sequence of global sections
\[
 0\longrightarrow\HH^0(C,f^*T_{Y_a})
 \longrightarrow\HH^0(C,f^*T_{\calY_s})
 \xrightarrow{d\pi}T_a\Sigma_r\longrightarrow0
\]
is exact. Any \(L\)-linear section of its last map induces a
splitting of the pulled-back tangent sequence
\begin{equation}\label{eq:horizontal-sequence}
 0\longrightarrow f^*T_{Y_a}
 \longrightarrow f^*T_{\calY_s}
 \xrightarrow{d\pi}\OO_C^{r+1}\longrightarrow0.
\end{equation}
\end{Lemma}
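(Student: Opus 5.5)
The plan is to derive everything from the pulled-back tangent sequence \eqref{eq:horizontal-sequence}, the surjectivity of \(dq_b\), and the smoothness of \(\pi\) from Lemma~\ref{lem:smooth-incidence}.

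\textbf{The bundle sequence.} Since \(\calY_s\to\Sigma_r\) is smooth, the relative tangent sequence
\[
 0\to T_{\calY_s/\Sigma_r}\to T_{\calY_s}\to\pi^*T_{\Sigma_r}\to0
\]
is an exact sequence of vector bundles on \(\calY_s\). Here \(T_{\calY_s/\Sigma_r}\) restricts to \(T_{Y_a}\) on the fiber, and \(\Sigma_r\) is open in \(\Aff^{r+1}\), so \(T_{\Sigma_r}\) is trivial. Pulling back along \(f\), which lands in the fiber over \(a\), gives \eqref{eq:horizontal-sequence}. In particular, \(f^*\pi^*T_{\Sigma_r}=\OO_C\otimes T_a\Sigma_r\cong\OO_C^{r+1}\).

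\textbf{The sequence of global sections.} Take global sections of \eqref{eq:horizontal-sequence}. This is left exact, and \(\HH^0(C,\OO_C^{r+1})=T_a\Sigma_r\), so only surjectivity on the right needs an argument. For this I would use the family \(\calH\). The point \(b\) gives a morphism \(\PP^1\times\calH\to\calY_s\), defined near \(b\) by the universal coordinates \((Q,P_i,\lambda)\); these are defined up to a scalar and have no common zero, so we work on an open neighbourhood of \(b\). Differentiating at \(b\) sends each \(v\in T_b\calH\) to a global section \(\tilde v\) of \(f^*T_{\calY_s}\), namely the first-order deformation of \(f\) inside \(\calY_s\). This construction commutes with projection to \(\Sigma_r\), so \(d\pi(\tilde v)\) is the constant section \(dq_b(v)\). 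Since \(dq_b\) is surjective, \(d\pi\) is surjective on global sections.

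\textbf{The splitting.} Let \(\tau:T_a\Sigma_r\to\HH^0(C,f^*T_{\calY_s})\) be an \(L\)-linear section of \(d\pi\). Define an \(\OO_C\)-linear map \(\OO_C\otimes_L T_a\Sigma_r\to f^*T_{\calY_s}\) by \(g\otimes v\mapsto g\,\tau(v)\). Composing with \(d\pi\) gives \(g\otimes v\mapsto g\,v\), which is the identity because \(d\pi\) is \(\OO_C\)-linear and sends \(\tau(v)\) to the constant section \(v\). So this map splits \eqref{eq:horizontal-sequence}.

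\textbf{Main obstacle.} The hardest step is making the first-order deformation precise. One has to check that the universal relation over \(\calH\) really gives a morphism \(\PP^1_{\calH}\to\calY_s\) near \(b\), which uses that the normalized coordinates vary algebraically. One also has to check that \(\tilde v\) is a global section of \(f^*T_{\calY_s}\) and not just a rational one. I would handle this by restricting to \(\Spec L[\epsilon]\to\calH\) at \(b\). This gives a morphism \(\PP^1_{L[\epsilon]}\to\calY_s\) extending \(f\), and such extensions correspond exactly to global sections of \(f^*T_{\calY_s}\).
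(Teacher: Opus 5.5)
Your proposal is correct and follows the paper's proof essentially step for step. Exactness of the bundle sequence comes from the smoothness in Lemma~\ref{lem:smooth-incidence}; surjectivity on global sections comes from differentiating the evaluation family $C\times\calH\to\calY_s$ at $b$ and using that $dq_b$ is surjective; and the splitting is the $\OO_C$-linear extension of the chosen $L$-linear section. Your formalization via $L[\epsilon]$-points only makes the paper's ``differentiating this family at $b$'' precise, and it also shows that shrinking $\calH$ to be smooth is not needed for this lemma.
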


\begin{proof}
The sequence \eqref{eq:horizontal-sequence} is exact by
Lemma~\ref{lem:smooth-incidence}. Taking global sections is left
exact, and \(\HH^0(C,\OO_C)=L\). It remains to prove that
\(d\pi:\HH^0(C,f^*T_{\calY_s})\to T_a\Sigma_r\) is surjective.

After shrinking \(\calH\) around \(b\), we may assume that
\(\calH\) is smooth. The universal homogeneous coordinates
define an evaluation morphism
\[
 \operatorname{ev}:C\times\calH\longrightarrow\calY_s,
 \qquad\pi\circ\operatorname{ev}=q\circ\operatorname{pr}_{\calH},
\]
whose restriction to \(C\times\{b\}\) is \(f\).
Indeed, the coordinates have no common zero, and their common
scalar ambiguity does not affect the projective morphism.

Differentiating this family at \(b\), with \(C\) fixed, gives
\[
 T_b\calH\longrightarrow\HH^0(C,f^*T_{\calY_s}),
\]
whose composition with \(d\pi\) is \(dq_b\). Since \(dq_b\)
is surjective, so is \(d\pi\) on global sections.

Choose an \(L\)-linear section
\[
 H_0:T_a\Sigma_r\longrightarrow\HH^0(C,f^*T_{\calY_s}).
\]
Extending \(H_0\) by \(\OO_C\)-linearity gives
\[
 H:T_a\Sigma_r\otimes_L\OO_C\longrightarrow f^*T_{\calY_s},
 \qquad c\otimes g\longmapsto gH_0(c).
\]
Then \(d\pi\circ H=\operatorname{id}\), so \(H\) splits
\eqref{eq:horizontal-sequence}. The splitting is regular on all
of \(C\), since the images of \(H_0\) are global sections.
\end{proof}

The two spaces of global sections in Lemma~\ref{lem:horizontal}
parametrize first-order deformations of \(f\), with \(C\) fixed,
in \(Y_a\) and \(\calY_s\), respectively. Their quotient
\(T_a\Sigma_r\) records the variation of the composing polynomial.
Fix a section \(H_0\) as above, its induced splitting \(H\), and
the projection
\[
 P=1-H\circ d\pi:f^*T_{\calY_s}\longrightarrow f^*T_{Y_a}.
\]
\section{The Linearized Equation}

Put \(X=\PP^s\times\Lambda_s\), and let \(j:Y_a\hookrightarrow X\)
be the inclusion. By Lemma~\ref{lem:smooth-incidence}, \(Y_a\)
is a smooth hypersurface with normal bundle
\(N_{Y_a/X}\simeq\OO_{Y_a}(r)\), where the twist is taken
from \(\PP^s\). Dualizing the conormal sequence gives
\cite[Tag 06BB]{stacks}
\[
 0\longrightarrow T_{Y_a}\longrightarrow j^*T_X
 \xrightarrow{d\Phi_a}\OO_{Y_a}(r)\longrightarrow0.
\]
The normal map \(d\Phi_a\) is the differential of the defining
equation. Pullback along \(f\) preserves exactness, since the
quotient is locally free. The scalar component of \(f\) is the
constant \(\lambda\), and \(f^*\OO_{\PP^s}(1)=\OO_C(D)\).
We therefore obtain
\begin{equation}\label{eq:normal-pullback}
 0\longrightarrow f^*T_{Y_a}
 \longrightarrow f^*T_{\PP^s}\oplus
       (T_\lambda\Lambda_s\otimes_L\OO_C)
 \xrightarrow{f^*d\Phi_a}\OO_C(rD)\longrightarrow0.
\end{equation}

The scalar projection induces
\[
 \operatorname{pr}_\Lambda:\HH^0(C,f^*T_{\calY_s})
       \longrightarrow T_\lambda\Lambda_s\simeq L^{s-1}.
\]
Let \(W\) be the scalar image of the chosen lifting \(H_0\):
\begin{equation}\label{eq:W}
 W=\operatorname{im}(\operatorname{pr}_\Lambda\circ H_0),
 \qquad\ell=\dim W.
\end{equation}
For the coefficient basis \(e_0,\ldots,e_r\), put
\[
 v_j=\operatorname{pr}_\Lambda H_0(e_j),
 \qquad W=\Span_L\{v_0,\ldots,v_r\}.
\]
These are the constant scalar components of \(H(e_j)\).
The space \(W\) depends on the choice of \(H_0\): the scalar
projection need not vanish on \(\HH^0(C,f^*T_{Y_a})\), and
therefore need not descend to the quotient \(T_a\Sigma_r\).

Restrict the last map of \eqref{eq:normal-pullback} to obtain
\[
 \mu_W:f^*T_{\PP^s}\oplus(W\otimes_L\OO_C)
       \longrightarrow\OO_C(rD),
 \qquad\calB=\ker\mu_W.
\]
Taking its kernel gives, as subsheaves of \(f^*j^*T_X\),
\[
 \calB=f^*T_{Y_a}\cap
       \bigl(f^*T_{\PP^s}\oplus(W\otimes_L\OO_C)\bigr).
\]
Equivalently,
\[
 \calB=\ker\!\left(f^*T_{Y_a}\longrightarrow
       (T_\lambda\Lambda_s/W)\otimes_L\OO_C\right),
\]
where the map is induced by scalar projection. Thus \(\calB\)
consists of tangent sections in \(Y_a\) whose scalar components
lie in \(W\otimes_L\OO_C\).
On \(Q\ne0\), the restricted normal map \(\mu_W\) is
\[
 (u,w)\longmapsto
 \sum_{i=1}^s\lambda_i\sigma_a'(p_i)u_i
       +\sum_{i<s}\sigma_a(p_i)w_i.
\]
The full normal map in \eqref{eq:normal-pullback} is surjective,
but its restriction \(\mu_W\) need not be. The latter has generic
rank one, since the \(p_i\) are nonconstant and the characteristic
is zero. Hence its image is \(\OO_C(rD)(-\Delta)\) for an
effective divisor \(\Delta\), which records this failure of
surjectivity.
Thus \(\calB\) is a vector bundle and
\begin{equation}\label{eq:kernel-sequence}
 0\longrightarrow\calB
 \longrightarrow f^*T_{\PP^s}\oplus(W\otimes_L\OO_C)
 \xrightarrow{\mu_W}\OO_C(rD)(-\Delta)\longrightarrow0.
\end{equation}
Write
\[
 \tau=\deg\Delta,\qquad h=\rank\calB-1=s+\ell-2\le2s-3.
\]
The pulled-back Euler sequence is
\[
 0\longrightarrow\OO_C\longrightarrow\OO_C(D)^{s+1}
   \longrightarrow f^*T_{\PP^s}\longrightarrow0.
\]
Twisting this sequence and \eqref{eq:kernel-sequence} by
\(\OO_C(-1)\), Riemann-Roch gives
\begin{equation}\label{eq:kernel-chi}
 \chi(\calB(-1))=(s+1-r)D+\tau.
\end{equation}

\begin{Lemma}\label{lem:defect}
We have \(\tau\le(s-1)D\). If \(\ell=s-1\), then \(\Delta=0\).
\end{Lemma}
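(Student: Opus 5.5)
The second assertion is formal, so I would dispose of it first. If \(\ell=s-1\), then \(W=T_\lambda\Lambda_s\), because \(W\subset T_\lambda\Lambda_s\simeq L^{s-1}\) and the dimensions agree. In that case \(\mu_W\) is exactly the last map \(f^*d\Phi_a\) of \eqref{eq:normal-pullback}. That map is surjective, because \eqref{eq:normal-pullback} is the pullback of the dualized conormal sequence, which is exact by Lemma~\ref{lem:smooth-incidence}. Hence the image \(\OO_C(rD)(-\Delta)\) is all of \(\OO_C(rD)\), and \(\Delta=0\).

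For the bound \(\tau\le(s-1)D\), I would compare \(\Delta\) with a larger divisor. Let \(\Delta'\) be defined by writing the image of the restriction of \(\mu_W\) to the summand \(f^*T_{\PP^s}\) as \(\OO_C(rD)(-\Delta')\). Since \(\mu_W\) has more source than this restriction, its image is at least as big, so \(\Delta\le\Delta'\). It therefore suffices to show \(\deg\Delta'\le(s-1)D\).

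To compute \(\Delta'\), I would lift through the Euler sequence. The composite \(\OO_C(D)^{s+1}\to f^*T_{\PP^s}\to\OO_C(rD)\) sends the coordinate vectors \(e_Z,e_1,\ldots,e_s\) to the sections
\[
G_Z=\sum_i\lambda_iF_Z(P_i,Q),\qquad G_i=\lambda_iF_X(P_i,Q)\qquad(1\le i\le s),
\]
each of \(\OO_C((r-1)D)\). Thus \(\Delta'\) is the base divisor of \(G_Z,G_1,\ldots,G_s\).

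Bounding this base divisor is the main obstacle. The naive bound is only \(\deg\Delta'\le(r-1)D\), the degree of a single \(G_i\), which is far too weak. I would try to pass to the cokernel
\[
\calR'=\OO_C(rD)/\OO_C(rD)(-\Delta')
\]
of the \(f^*T_{\PP^s}\)-part, whose length is \(\deg\Delta'\). Surjectivity of the full map \eqref{eq:normal-pullback} shows that \(\calR'\) is a quotient of \(\OO_C^{s-1}\), generated by the images of the \(s-1\) scalar sections \(F(P_j,Q)\), \(j<s\).

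The intended key step uses Euler's identity,
\[
Q\,F_Z(P_j,Q)+P_j\,F_X(P_j,Q)=r\,F(P_j,Q).
\]
I would pair it with the relation \(\sum_i\lambda_iF(P_i,Q)=0\), which gives \(QG_Z+\sum_iP_iG_i=0\). Combining these by suitable linear combinations should show that each \(F(P_j,Q)\), multiplied by a section of \(\OO_C(D)\), lands in the image of the \(f^*T_{\PP^s}\)-part; the natural candidates for the multiplier are \(Q\) or \(P_j\), chosen so as not to vanish identically. Granting this, the \(j\)-th generator of \(\calR'\) is killed by the ideal of a divisor of degree \(D\). Then \(\calR'\) is a quotient of \(\bigoplus_{j<s}\OO_C/\OO_C(-D_j)\) with \(\deg D_j=D\), and so \(\length\calR'\le(s-1)D\).

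I am not sure of the exact combination. The point is that the multiplier must come from an honest element of \(f^*T_{\PP^s}\); the radial vector \((Q,P_1,\ldots,P_s)\) maps to zero there, and the deviation from it is controlled by the Euler relation above. If the multiplier has to be \(Q\), I would then handle points of \(E=\operatorname{div}(Q)\) separately. There \(Z=0\), and Lemma~\ref{lem:smooth-incidence} shows that some \(G_i\) is a nonzero multiple of \(P_i^{r-1}\), which I expect to control the local length there.
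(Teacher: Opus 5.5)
Your treatment of the case \(\ell=s-1\) is correct and is exactly the paper's argument. The gap is in the bound \(\tau\le(s-1)D\). The reduction \(\Delta\le\Delta'\) is valid. But the inequality \(\deg\Delta'\le(s-1)D\) that you then aim for is false in general, and the step you leave open cannot be carried out.

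\emph{Why it fails.} Your argument uses only the relation, Euler's identity and surjectivity of the full normal map. It never uses the dominating family, so it would apply to every relation. Take \(s=2\), the squarefree odd polynomial \(\sigma_a(z)=z^3-z\), and \(p_1=t\), \(p_2=-t\), \(\lambda=(1,1)\), \(D=1\).
\begin{itemize}
\item On \(Q\ne0\), the spatial image is the ideal generated by the \(\lambda_i\sigma_a'(p_i)\). The contribution of \(G_Z\) is \(-\sum_i\lambda_ip_i\sigma_a'(p_i)\), by Euler and the relation.
\item Here \(\sigma_a'(-t)=\sigma_a'(t)=3t^2-1\), so \(\deg\Delta'=2>1=(s-1)D\). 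For an odd squarefree \(\sigma_a\) of any degree \(r\), the same example gives \(\deg\Delta'=(r-1)D\).
\item Since \(\sigma_a\) is squarefree, \(F(P_1,Q)=Q^r\sigma_a(p_1)\) is coprime to \(\sigma_a'(p_1)\) on the affine part. So any multiplier bringing it into the spatial image must vanish on all of \(\operatorname{div}\sigma_a'(p_1)\), which has degree \((r-1)D\), not \(D\).
\end{itemize}
Pairing Euler's identity with the relation cannot help. It only controls the sum \(QG_Z\), not the individual terms \(Q\lambda_jF_Z(P_j,Q)\).

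\emph{The missing idea.} You cannot discard the scalar summand \(W\otimes_L\OO_C\): it is exactly where the dominating family enters. The paper's proof runs as follows.
\begin{itemize}
\item Write the lift as \(H(e_j)=(u_j,v_j,e_j)\) with \(v_j\in W\). Tangency of \(H(e_j)\) to \(\calY_s\) gives \(\mu_W(u_j,v_j)=-M_j\), where \(M_j=\sum_i\lambda_iP_i^jQ^{r-j}\).
\item Hence every moment section \(M_j\) lies in the image of \(\mu_W\) (not of the spatial part alone), and so vanishes along \(\Delta\).
\item The Vandermonde matrix \((p_i^\nu)_{0\le\nu<s}\) is invertible and \(\lambda\ne0\). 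So some \(\nu\le s-1\le r\) has \(\widehat S_\nu=\sum_i\lambda_iP_i^\nu\ne0\), and \(M_\nu=Q^{r-\nu}\widehat S_\nu\).
\item The spatial differential alone is surjective at \(Z=0\) by Lemma~\ref{lem:smooth-incidence}, so \(\Delta\) is disjoint from \(E\). You did anticipate this local fact.
\item Therefore \(\Delta\le\operatorname{div}\widehat S_\nu\), and \(\tau\le\nu D\le(s-1)D\).
\end{itemize}
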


\begin{proof}
By Lemma~\ref{lem:smooth-incidence}, the spatial differential
alone is surjective at target infinity. Hence \(\Delta\)
is disjoint from \(E\).

Write \(H(e_j)=(u_j,v_j,e_j)\) in the ambient tangent bundle.
Tangency to the universal equation gives
\begin{equation}\label{eq:moment-lift}
 \mu_W(u_j,v_j)=-M_j,
 \qquad M_j=\sum_i\lambda_iP_i^jQ^{r-j}.
\end{equation}
Thus every \(M_j\in\HH^0(C,\OO_C(rD))\) vanishes along
\(\Delta\), with its full multiplicities.
The Vandermonde matrix \((p_i^j)_{0\le j<s,1\le i\le s}\)
is invertible over \(L(t)\). Since \((\lambda_i)\ne0\),
some moment \(\sum_i\lambda_ip_i^\nu\) is nonzero for
\(0\le\nu\le s-1\). Its numerator
\[
 \widehat S_\nu=\sum_i\lambda_iP_i^\nu
       \in\HH^0(C,\OO_C(\nu D))
\]
is a nonzero section. Since \(M_\nu=Q^{r-\nu}\widehat S_\nu\)
and \(\Delta\) is disjoint from \(E\), we obtain
\[
 \Delta\le\operatorname{div}(\widehat S_\nu),
 \qquad\tau\le\nu D\le(s-1)D.
\]
Here we use \(r\ge s-1\), so that \eqref{eq:moment-lift}
is available for \(j=\nu\).

If \(\ell=s-1\), then \(W=L^{s-1}\) and \(\mu_W\) is
the full normal map in \eqref{eq:normal-pullback}. It is
surjective, so \(\Delta=0\).
\end{proof}

\section{Tangent Fields and Riemann-Roch}

We construct explicit tangent fields on the affine part \(Z=1\)
of \(\calY_s\), all with zero scalar component. Write a coefficient
variation as a polynomial \(\delta\sigma(z)\) of degree at most
\(r\), and put
\[
 T_x(z)=\prod_{i=1}^s(z-x_i)
       =z^s+b_1(x)z^{s-1}+\cdots+b_s(x),
 \qquad
 S_j(x,\lambda)=\sum_i\lambda_ix_i^j.
\]
The degree in the variables \(x_1,\ldots,x_s\) of \(b_j\) is
\(j\). Parameters \(a\) and \(\lambda\) have degree zero in
all the bounds below.

\begin{Lemma}\label{lem:frame}
Assume \(r\ge s\). On the locus of distinct affine coordinates,
the following fields span the tangent space to
\(\calY_s\to\Lambda_s\):
\begin{alignat}{3}
 &\delta\sigma=z^jT_x,&\qquad &\delta x_i=0,
       &\quad&0\le j\le r-s;\label{eq:frame-A}\\
 &\delta\sigma=S_i z^j-S_jz^i,&&\delta x_l=0,
       &&0\le i<j\le s-1;\label{eq:frame-B}\\
 &\delta\sigma=R_m:=-z^m\sigma_a'(z)+T_xQ_m,
       &&\delta x_i=x_i^m,
       &&0\le m\le s-1.\label{eq:frame-C}
\end{alignat}
Here \(Q_0=Q_1=0\), and for \(m\ge2\), \(Q_m\) cancels only
the terms of \(-z^m\sigma_a'\) of degree strictly greater than
\(r\). Each coefficient of the first two families has spatial
degree at most \(s\). The same is true for
\eqref{eq:frame-C} when \(m\le2\); for \(m\ge3\), its
coefficient degrees are at most \(s+m-2\).
\end{Lemma}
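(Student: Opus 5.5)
The plan is to work directly with the affine defining equation \(\Phi(a,x,\lambda)=\sum_i\lambda_i\sigma_a(x_i)\) from the proof of Lemma~\ref{lem:smooth-incidence}, holding \(\lambda\) fixed. A tangent vector to \(\calY_s\to\Lambda_s\) at a point with \(Z=1\) is a pair \((\delta\sigma,\delta x)\) satisfying
\[
 \sum_{i=1}^s\lambda_i\bigl(\delta\sigma(x_i)+\sigma_a'(x_i)\,\delta x_i\bigr)=0,
\]
and this is a single linear condition on an \((r+1+s)\)-dimensional space. I would prove the lemma in three steps: check that each listed field satisfies this condition, show that the fields span the kernel, and bound the coefficient degrees.

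\emph{Tangency.} For \eqref{eq:frame-A}, \(T_x(x_i)=0\) for every \(i\). For \eqref{eq:frame-B}, the condition reads \(S_iS_j-S_jS_i=0\). For \eqref{eq:frame-C}, the two \(\sigma_a'(x_l)x_l^m\) terms cancel, and \(T_x(x_l)Q_m(x_l)=0\). I would also note that \(R_m\) has degree at most \(r\), so it is a legitimate coefficient variation; this is exactly what the choice of \(Q_m\) arranges.

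\emph{Spanning.} Since the \(x_i\) are distinct, the Vandermonde matrix \((x_i^m)_{0\le m\le s-1,\,1\le i\le s}\) is invertible. Hence a suitable combination of the fields \eqref{eq:frame-C} realizes any prescribed \(\delta x\). After subtracting it, we may assume \(\delta x=0\) and \(\sum_i\lambda_i\delta\sigma(x_i)=0\). Divide: \(\delta\sigma=qT_x+\rho\) with \(\deg q\le r-s\) (this uses \(r\ge s\)) and \(\deg\rho<s\). The part \(qT_x\) lies in the span of \eqref{eq:frame-A}. Writing \(\rho=\sum_{j<s}c_jz^j\), the condition becomes \(\sum_jc_jS_j=0\).

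It then remains to see that the vectors \(S_ie_j-S_je_i\) span the kernel of the functional \(c\mapsto\sum_j c_jS_j\) on \(L^s\). This holds as soon as some \(S_{j_0}\ne0\): for \(c\) in the kernel, \(S_{j_0}c=\sum_j c_j(S_{j_0}e_j-S_je_{j_0})\). Nonvanishing of some \(S_j\) with \(j<s\) follows from invertibility of the Vandermonde matrix together with \(\lambda\ne0\). This is the one step needing care; it uses distinctness of the \(x_i\) a second time.

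\emph{Degree bounds.} The coefficients of \(z^jT_x\) are the \(b_u\), of degree \(u\le s\). The coefficients of \eqref{eq:frame-B} are the \(S_j\), of degree at most \(s-1\). For \eqref{eq:frame-C}, the components \(\delta x_i=x_i^m\) have degree \(m\). The polynomial \(-z^m\sigma_a'\) has degree \(r+m-1\), with coefficients of degree zero. For \(m\ge2\), \(Q_m\) has degree \(m-2\) and is determined by a unipotent triangular system cancelling the terms in degrees \(r+1,\dots,r+m-1\). By induction, the coefficient of \(z^{m-2-i}\) in \(Q_m\) is isobaric of weight \(i\) in the \(b_u\). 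Then the coefficient of \(z^n\) in \(T_xQ_m\) has weight \(s+m-2-n\le s+m-2\), with the maximum at \(n=0\). For \(m=2\) this gives \(s\), and also \(m\le2\le s\). For \(m\le1\), \(Q_m=0\) and the degrees are at most \(1\).

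The main obstacle is this weight bookkeeping for \(Q_m\). I would handle it by assigning \(b_u\) weight \(u\) and observing that the long division of a weight-zero polynomial by the monic, isobaric \(T_x\) preserves isobaricity.
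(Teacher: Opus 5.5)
The overall structure matches the paper's proof. The tangency checks and the spanning argument are the same: Vandermonde to clear $\delta x$, division by $T_x$ using $r\ge s$, and the kernel of $c\mapsto\sum_jc_jS_j$ spanned by the $S_\nu e_j-S_je_\nu$. So is the plan of inducting through the triangular system that determines $Q_m$.

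However, the weight bookkeeping, which you single out as the main obstacle, is wrong in two places.

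\emph{The degree of $Q_m$.} $Q_m$ does not have degree $m-2$. To cancel $z^{r+m-1}$, the product $T_xQ_m$ must have that degree, so $Q_m=\sum_{u=0}^{m-2}q_uz^{r+m-1-s-u}$. It has $m-1$ coefficients, degree $r+m-1-s$, and lowest exponent $r+1-s$. With $\deg Q_m=m-2$ you would get $\deg T_xQ_m=s+m-2<r+m-1$, and no cancellation is possible.

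\emph{Isobaricity.} The coefficients of $Q_m$ are not isobaric. Top-down cancellation gives $q_u=(r-u)a_{r-u}-\sum_{j=1}^ub_jq_{u-j}$. Already $q_1=(r-1)a_{r-1}-ra_rb_1$ has parts of weight $0$ and weight $1$. Your principle that division by the isobaric $T_x$ preserves isobaricity does not apply here. With $\deg z=1$ and $\deg b_u=u$, the dividend $-z^m\sigma_a'$ has weight-zero coefficients in many $z$-degrees, so it is not isobaric to begin with.

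Both slips are repairable, because only upper bounds are needed. Induction on the recurrence gives $\deg_xq_u\le u$, which is exactly the paper's argument. The coefficient of $z^{r+m-1-v}$ in $T_xQ_m$ is $\sum_{j+u=v}b_jq_u$, where $b_0=1$, $j\le s$ and $u\le m-2$. It therefore has $x$-degree at most $v\le s+m-2$. Adding the weight-zero coefficients of $-z^m\sigma_a'$ does not change this. So the bound $s+m-2$ is correct, but it is attained at the exponent $r+1-s$, not at $0$.

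Alternatively, you can rescue isobaricity by giving $a_j$ weight $r-j$. Then $\sigma_a$ is isobaric of weight $r$, and the recurrence shows each $q_u$ is isobaric of total weight $u$. Its $x$-degree is then bounded by $u$. With either correction your argument coincides with the paper's.
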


\begin{proof}
For \eqref{eq:frame-A}, every weighted evaluation at the nodes
vanishes. For \eqref{eq:frame-B}, it is
\(S_iS_j-S_jS_i=0\). Their degree bounds follow immediately.

For \(m\ge2\), write
\begin{equation}\label{eq:truncated-quotient}
 Q_m(z)=\sum_{u=0}^{m-2}q_u z^{r+m-1-s-u}.
\end{equation}
The smallest exponent is \(r+1-s\ge0\). Top-term cancellation
determines its coefficients successively by
\begin{equation}\label{eq:recurrence}
 q_u=(r-u)a_{r-u}-\sum_{j=1}^u b_jq_{u-j},
                  \qquad 0\le u\le m-2.
\end{equation}
Induction gives \(\deg_xq_u\le u\). The product \(b_jq_u\)
has degree at most \(j+u\), so every coefficient of \(R_m\)
has degree at most \(s+m-2\). The spatial coefficients have
degree \(m\), which satisfies the same bound. For \(m=0,1\),
the coefficients of \(R_m\) have spatial degree zero and the spatial
coefficients have degree at most one. In every case
\(\deg_zR_m\le r\).

Tangency of \eqref{eq:frame-C} is the polynomial identity
\[
 \delta\Phi
  =\sum_i\lambda_i\bigl(\sigma_a'(x_i)x_i^m+R_m(x_i)\bigr)
  =\sum_i\lambda_iT_x(x_i)Q_m(x_i)=0.
\]
No inverse of a difference of nodes occurs in these fields.

To prove spanning, first prescribe a spatial variation. The vectors
\((x_i^m)_{i=1}^s\), \(0\le m\le s-1\), form a basis at
distinct nodes by the Vandermonde determinant. Subtract a combination
of \eqref{eq:frame-C} to leave zero spatial variation. The remaining
coefficient variation \(A(z)\) satisfies
\(\sum_i\lambda_iA(x_i)=0\). Divide \(A\) by the monic
polynomial \(T_x\). Its quotient is accounted for by
\eqref{eq:frame-A}, and its remainder has degree less than \(s\).

On this remainder space, the weighted-evaluation functional has
coordinates \((S_0,\ldots,S_{s-1})\). They are not all zero,
again by Vandermonde and the nonzero scalar vector. If \(S_\nu\ne0\),
the vectors \(S_\nu e_j-S_je_\nu\), \(j\ne\nu\), span its
kernel. These are among \eqref{eq:frame-B}, up to sign.
This proves spanning of the entire zero-scalar tangent space.
\end{proof}

Assign to every field in the first two families the pole bound \(s\).
Assign the same pole bound to \eqref{eq:frame-C} with \(m\le2\),
and assign \(s+m-2\) to its field with index \(m\ge3\).
There is only one field with each such exceptional index. If
\[
 e_s=\sum_{m=3}^{s-1}(m-2)=\frac{(s-3)(s-2)}2,
\]
where the sum is empty for \(s=2,3\), then any \(h\) distinct
fields have total pole bound at most \(sh+e_s\).

\begin{Proposition}\label{prop:field-sequence}
There are \(h\) tangent fields with pole bounds
\(w_1,\ldots,w_h\), where \(\sum_iw_i\le sh+e_s\),
which give an exact sequence
\begin{equation}\label{eq:field-sequence}
 0\longrightarrow\OO_C(2)\oplus
   \bigoplus_{i=1}^h\OO_C(-w_iE)
   \longrightarrow\calB\longrightarrow\calR\longrightarrow0
\end{equation}
with \(\calR\) torsion. Consequently,
\begin{equation}\label{eq:RR-length}
 0\le\length_L\calR
   =\left(s+1-r+\sum_iw_i\right)D+\tau-2.
\end{equation}
\end{Proposition}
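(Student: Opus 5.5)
We need to build h+1 sections of \(\calB\) (with twists) that are generically independent, and then compute the length of the cokernel from Riemann--Roch.

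\textbf{The \(\OO_C(2)\) summand.} The source differential \(df:T_C=\OO_C(2)\to f^*T_{Y_a}\) is nonzero, since \(f\) is nonconstant. Its scalar component vanishes because \(\lambda\) is constant. Hence \(df\) lands in \(\calB\), and it is injective as a map of sheaves.

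\textbf{The \(\OO_C(-w_iE)\) summands.} Take the zero-scalar fields of Lemma~\ref{lem:frame} and apply the coefficient-variation part of the splitting. A frame field \(\xi\), with coefficient variation \(\delta\sigma\in T_a\Sigma_r\), is tangent to \(\calY_s\). Pull it back along \(f\) on \(Q\ne0\) and subtract \(H(\delta\sigma)\); the result \(P(f^*\xi)\) is a section of \(f^*T_{Y_a}\) over \(C\setminus E\). Its scalar component is \(-\operatorname{pr}_\Lambda H(\delta\sigma)\in W\otimes\OO_C\), so it lies in \(\calB\) there. The spatial coefficients of \(\xi\) are polynomials in \(x_i=p_i\) of degree at most \(w\). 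The \(H\)-part is regular. So the section acquires poles only along \(E\), of order at most \(wE\), once one checks in the chart at infinity how affine spatial vectors \(\partial_{x_i}\) sit in \(f^*T_{\PP^s}\); there \(\partial_{x_i}\) is regular, possibly vanishing. Thus \(P f^*\xi\) is a global section of \(\calB(w E)\), that is, a map \(\OO_C(-wE)\to\calB\).

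\textbf{Independence and the count.} The fields of Lemma~\ref{lem:frame} span the zero-scalar tangent space of \(\calY_s\to\Lambda_s\) at distinct nodes. The \(p_i\) are distinct, so this holds at the generic point of \(C\). After applying \(P\), the images together with \(df\) generically span \(\calB\), which has rank \(h+1\). To see this, the generic fiber of \(\calB\) is the set of tangent vectors to \(Y_a\) with scalar part in \(W\). Compare dimensions: the zero-scalar tangent space of \(Y_a\) has dimension \(s-1\), and the \(W\)-directions supply the remaining \(\ell\). The \(\ell\) directions come from differences of \(H(e_j)\) and frame fields with the same \(\delta\sigma\). I will choose \(h\) of the fields whose images, together with \(df\), are generically independent. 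Any \(h\) distinct fields have total pole bound at most \(sh+e_s\), which gives \(\sum w_i\le sh+e_s\). The resulting map from the direct sum is then injective with torsion cokernel \(\calR\). This is the step I expect to be the main obstacle: it needs a careful dimension count showing that the \(P\)-projected frame fields really fill the \(W\)-directions rather than only the zero-scalar part. If needed, I would include the \(H\)-corrected versions of the \(e_j\) directions.

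\textbf{The length formula.} Twist by \(\OO_C(-1)\). Since \(\length\calR=\chi(\calR(-1))\), additivity of \(\chi\) gives
\[
\length\calR=\chi(\calB(-1))-\chi(\OO_C(1))-\sum_i\chi(\OO_C(-w_iE-1)).
\]
By \eqref{eq:RR}, \(\chi(\OO_C(1))=2\) and \(\chi(\OO_C(-w_iE-1))=-w_iD\). By \eqref{eq:kernel-chi}, \(\chi(\calB(-1))=(s+1-r)D+\tau\). Substituting yields \eqref{eq:RR-length}, and nonnegativity holds because \(\calR\) is torsion.
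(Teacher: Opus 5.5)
Your outline follows the paper's proof: $df$ gives the $\OO_C(2)$ summand, the $P$-projections of the fields of Lemma~\ref{lem:frame} give sections of $\calB(wE)$, $h$ of them complete $df$ to a generic basis, and your Riemann--Roch computation of \eqref{eq:RR-length} is correct. The gap is the step you flag yourself. Computing $\dim\calB_\eta=(s-1)+\ell$ does not show that the projected fields fill $\calB_\eta$. Your remark about ``differences of $H(e_j)$ and frame fields with the same $\delta\sigma$'' presupposes something you never justify: that for every coefficient vector $c$ some zero-scalar tangent vector of $\calY_s$ at $f(\eta)$ has coefficient variation $c$. Your fallback does not help either. $P$ kills the image of $H$, and any added section that is not a frame field carries no assigned pole bound, so the estimate $\sum_iw_i\le sh+e_s$ would be lost.

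The missing argument is short. Let $v\in\calB_\eta$. Its scalar component lies in $W\otimes_L L(t)$, so by the definition of $W$ there is $c\in L(t)^{r+1}$ with $\operatorname{pr}_\Lambda H(c)=-\operatorname{pr}_\Lambda v$. Then $v+H(c)$ is a tangent vector to $\calY_s$ at $f(\eta)$ with zero scalar component. Since the nodes $p_i$ are distinct, it is an $L(t)$-combination of the fields of Lemma~\ref{lem:frame}. Now $P$ is the identity on $f^*T_{Y_a}$ and $P\circ H=0$, so $v=P(v+H(c))$ is a combination of projected frame fields. Thus the projected fields alone span $\calB_\eta$, and $h$ of them complete $df\neq0$ to a basis.

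One smaller correction: the term $H(\delta\sigma)$ is not regular, because the coefficients of $\delta\sigma$ are polynomials in the $p_i$. It still has poles at most $wE$, since Lemma~\ref{lem:frame} also bounds the spatial degree of those coefficients by $w$. The projected section lies in $\calB$ off $E$. It therefore lies in $\calB(wE)$ everywhere, because its image in the torsion-free sheaf $\OO_C(rD)(wE)$ vanishes on a dense open set.
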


\begin{proof}
The affine translation field \(\partial/\partial x_i\) extends
to a global vector field on \(\PP^s\), induced by
\(X_i\mapsto X_i+\varepsilon Z\). The coefficient directions
are global on \(\Sigma_r\). A polynomial of spatial degree
at most \(w\), evaluated at the \(p_i\), has pole divisor
at most \(wE\). Each field in Lemma~\ref{lem:frame} therefore
pulls back to a regular section of the ambient tangent bundle
twisted by \(wE\). Its image under the hypersurface
differential vanishes away from \(E\), hence everywhere.
It is thus a section of \(f^*T_{\calY_s}(wE)\).

Apply the regular projection \(P\) defined after
Lemma~\ref{lem:horizontal}. The original field has zero scalar
component, and its horizontal part has scalar component in
\(W\otimes_L\OO_C\). Its projection consequently lies in
\(\HH^0(C,\calB(wE))\).

These projected sections span \(\calB\) at the generic point
\(\eta\) of \(C\). Indeed, for \(v\in\calB_\eta\), choose a
coefficient vector \(c\in L(t)^{r+1}\) such that \(H(c)\) has the opposite scalar
component. Then \(v+H(c)\) has zero scalar component.
Lemma~\ref{lem:frame} expresses it as a combination of the
displayed fields, since the \(p_i\) are distinct. Applying
\(P\) recovers \(v\).

The source differential has zero scalar component and gives
\[
 df:\OO_C(2)\longrightarrow\calB.
\]
It is generically nonzero because \(f\) is nonconstant and
the characteristic is zero. Choose \(h\) projected fields
which, together with \(df\), form a basis at the generic point.
The resulting map in \eqref{eq:field-sequence} is generically
an isomorphism. Its kernel is zero because its source is locally
free, and its cokernel \(\calR\) is torsion. This also covers
\(h=0\), when only \(df\) is needed. The chosen fields satisfy
\(\sum_iw_i\le sh+e_s\).

Twist \eqref{eq:field-sequence} by \(\OO_C(-1)\).
Riemann-Roch and \eqref{eq:kernel-chi} give
\begin{align*}
 \length_L\calR
 &=\chi(\calB(-1))-2+D\sum_iw_i\\
 &=\left(s+1-r+\sum_iw_i\right)D+\tau-2.
\end{align*}
Twisting preserves the length of a torsion sheaf, so this proves
\eqref{eq:RR-length}.
\end{proof}

\section{Proof of the Theorems}

\begin{proof}[Proof of Theorem~\ref{thm:rational}]
Suppose that the bounded-joint-degree bad locus is dense.
Section 2 supplies a relation in a dominating family, with
\(2\le s\le k\) and \(D\ge1\). The sequence \(A_s\) is
increasing, since \(A_{s+1}-A_s=5s-2>0\), and \(A_s\ge s\).
Thus \(r\ge A_k\) meets the hypotheses of the preceding lemmas.

If \(\ell=s-1\), then \(h=2s-3\) and \(\tau=0\) by
Lemma~\ref{lem:defect}. Proposition~\ref{prop:field-sequence}
gives
\begin{align*}
 0\le\length_L\calR
 &\le\bigl(s+1-r+s(2s-3)+e_s\bigr)D-2\\
 &=(A_s-r)D-2<0.
\end{align*}
If \(\ell\le s-2\), then \(h\le2s-4\) and
\(\tau\le(s-1)D\). The same proposition gives
\begin{align*}
 0\le\length_L\calR
 &\le\bigl(2s-r+s(2s-4)+e_s\bigr)D-2\\
 &=(A_s-1-r)D-2<0.
\end{align*}
Both cases are impossible. The closure of the bad image is
therefore proper in \(\Sigma_r\).
Let \(Z_{k,r,M}\) be the scheme-theoretic image of
\(\mathscr I\to\Sigma_r\).
Its support is this closure, since the morphism is of finite type
\cite[Tag 01R5]{stacks}. Define
\[
 U_{k,r,M}=\Sigma_r\setminus Z_{k,r,M}.
\]
The incidence and its scheme-theoretic image are defined over
\(\QQ\), so \(U_{k,r,M}\) is a nonempty open in \(\calA_r\).
Moreover,
\begin{equation}\label{eq:empty-incidence}
 \mathscr I\times_{\Sigma_r}U_{k,r,M}=\varnothing.
\end{equation}

Let \(K\) be a field of characteristic zero. Its prime subfield
is \(\QQ\), and
\[
 \calA_{r,K}:=\calA_r\times_{\Spec\QQ}\Spec K
       =\Spec K[a_0,\ldots,a_r].
\]
The base change \(U_{k,r,M,K}\subset\calA_{r,K}\) is open
\cite[Tag 01JY]{stacks}. More explicitly, a principal open
\(D(g)\subset\calA_r\) becomes the principal open defined by
the same polynomial \(g\) over \(K\), since
\[
 \QQ[a_0,\ldots,a_r]_g\otimes_{\QQ}K
       \cong K[a_0,\ldots,a_r]_g.
\]
The nonempty open \(U_{k,r,M}\) contains such a \(D(g)\) with
\(g\ne0\). Its image in \(K[a_0,\ldots,a_r]\) is nonzero,
so \(U_{k,r,M,K}\) is nonempty. It has \(K\)-points because
\(K\) is infinite.

For any extension \(L/K\), base change of
\eqref{eq:empty-incidence} gives an empty incidence over
\(U_{k,r,M,L}\). A relation over \(L\) with a composing polynomial
in \(U_{k,r,M}(K)\) would give an \(L\)-point of this
incidence. This proves the asserted exclusion over every such
extension.

It remains to show that the open meets \(\calA_r^0\).
For any constant \(c\), let
\[
 (T_c\sigma)(z)=\sigma(z+c).
\]
The identity
\[
 \sum_i\lambda_i\sigma(p_i)=0
 \quad\Longleftrightarrow\quad
 \sum_i\lambda_i(T_c\sigma)(p_i-c)=0
\]
shows that the geometric bad locus is translation invariant.
The input shift preserves joint degree: it changes generated
coordinates by the invertible linear transformation
\[
 [Q:P_1:\cdots:P_k]
   \longmapsto[Q:P_1-cQ:\cdots:P_k-cQ].
\]
It also preserves distinctness and nonconstancy. Translations
preserve \(\Sigma_r\), so its bad-image closure and its good
complement are invariant.

Choose a geometric point \(\sigma\in U_{k,r,M}\), and choose a
root \(\alpha\) of \(\sigma\) over an algebraic closure.
Then \(T_\alpha\sigma\in U_{k,r,M}\) and
\((T_\alpha\sigma)(0)=0\). Consequently
\(U_{k,r,M}\cap\calA_r^0\) is a nonempty relative open
defined over \(\QQ\). This assertion places no origin condition
on the rational inputs. Since \(\calA_r^0\cong\Aff^r_{\QQ}\),
the same principal-open argument shows that this intersection
has a nonempty base change, with rational points, over every
characteristic-zero coefficient field.
\end{proof}

\begin{proof}[Proof of Theorem~\ref{thm:polynomial}]
For univariate polynomial inputs of degrees at most \(M\), the
generated map has joint degree at most \(M\). Thus
Theorem~\ref{thm:rational} gives the assertion over every field
extension, and also the nonempty origin slice.

For multivariate inputs, choose a vector \(v\in L^n\) such that
restriction along \(t\mapsto tv\) preserves the positive degrees
of every \(p_i\), and preserves every nonzero difference
\(p_i-p_j\). Such vectors form a nonempty open: for each
nonconstant polynomial or nonconstant difference, it suffices to
avoid the zero set of its highest nonzero homogeneous part; a
nonzero constant difference imposes no condition. The field \(L\)
is infinite. Restriction therefore gives distinct nonconstant
univariate inputs of degree at most \(M\). Any scalar relation
among the original compositions restricts to the same scalar
relation among these compositions, which is impossible.

This line passes through the origin, so it also preserves
\(p_i(0)=0\). Moreover, the ordinary separation property on
\(U_{k,r,M}\cap\calA_r^0\) already applies to such inputs;
there is no need to preserve their origin values during the
translation argument for the composing polynomial.
\end{proof}

\begin{Remark}
The proof bounds the degree threshold independently of \(M\), but
does not bound the degree of a relation at an individual exceptional
composing polynomial. Properness of each closed set
\(Z_{k,r,M}\) does not imply properness of the closure of their
union. The distinction between an open for each finite \(M\)
and one open for all input degrees is therefore retained.
\end{Remark}

\subsection*{Acknowledgments.}
We thank Kathlén Kohn and Giovanni Luca Marchetti for helpful discussions
on the conjectures of \cite{KMMM} and their connection to the
identifiability of polynomial neural networks. 
X.C. thanks his Ph.D. supervisor, Prof.~Xi Chen, for teaching him algebraic
geometry, and Prof.~Cyrus Mostajeran for his advice and support in research.
Y.H. is grateful to the
organizers of the Wallenberg Advanced Scientific Forum Fall 2026 in Stockholm, where
part of this work was discussed, and to his supervisor Magdalena Larfors
for her guidance and support.
Y.H.'s work was supported by the Wallenberg
AI, Autonomous Systems and Software Program (WASP) funded by the Knut and
Alice Wallenberg Foundation.

\subsection*{Acknowledgment of AI assistance}
We used the large language model Claude (Anthropic) for discussion of the mathematical arguments, for symbolic verification of computations, and for assistance with writing and \LaTeX{} typesetting. All results and proofs were verified by the authors, who are solely responsible for the content of this paper.


\begin{thebibliography}{KMMM26}

\bibitem[Har77]{Hartshorne}
R. Hartshorne, \emph{Algebraic Geometry},
Graduate Texts in Mathematics, vol.~52, Springer-Verlag, New York, 1977.
\href{https://doi.org/10.1007/978-1-4757-3849-0}{doi:10.1007/978-1-4757-3849-0}.

\bibitem[KMMM26]{KMMM}
K. Kohn, G. L. Marchetti, A. Massarenti and M. Mella,
\emph{Linear independence of polynomial compositions and identifiability of deep neural networks},
2026, \href{https://arxiv.org/abs/2608.27113v1}{arXiv:2608.27113v1}.

\bibitem[NS79]{NS}
D. J. Newman and M. Slater,
\emph{Waring's problem for the ring of polynomials},
J. Number Theory \textbf{11} (1979), no.~4, 477--487.
\href{https://doi.org/10.1016/0022-314X(79)90027-1}{doi:10.1016/0022-314X(79)90027-1}.

\bibitem[FRWY24]{FRWY24}
B. Finkel, J. I. Rodriguez, C. Wu and T. Yahl,
\emph{Activation degree thresholds and expressiveness of polynomial neural networks},
2024, \href{https://arxiv.org/abs/2408.04569v4}{arXiv:2408.04569v4}.

\bibitem[Cra25]{Cra25}
A. Cr\u{a}ciun,
\emph{Linear independence of powers for polynomials},
2025, \href{https://arxiv.org/abs/2507.10163v1}{arXiv:2507.10163v1}.

\bibitem[Stacks]{stacks}
The Stacks Project Authors, \emph{The Stacks Project},
\url{https://stacks.math.columbia.edu}, Tags
\href{https://stacks.math.columbia.edu/tag/054K}{054K},
\href{https://stacks.math.columbia.edu/tag/0B5B}{0B5B},
\href{https://stacks.math.columbia.edu/tag/052B}{052B},
\href{https://stacks.math.columbia.edu/tag/04QM}{04QM},
\href{https://stacks.math.columbia.edu/tag/01V4}{01V4},
\href{https://stacks.math.columbia.edu/tag/01R5}{01R5},
\href{https://stacks.math.columbia.edu/tag/01JY}{01JY}, and
\href{https://stacks.math.columbia.edu/tag/06BB}{06BB},
accessed October 6, 2026.
\end{thebibliography}
\end{document}